\theoremstyle{plain}
\newtheorem{theorem}{Theorem}
\newtheorem{lemma}{Lemma}
\theoremstyle{remark}
\theoremstyle{definition}
\newtheorem{definition}{Definition}
\newcommand{\thr}[1]{\mathsf{#1}}
\newcommand{\PA}{\thr{PA}}
\renewcommand{\exp}{\mathrm{exp}}
\newbox\gnBoxA
\newdimen\gnCornerHgt
\newdimen\gnArgHgt
\def\gnmb #1{%
\setbox\gnBoxA=\hbox{$#1$}%
\gnArgHgt=\ht\gnBoxA%
\ifnum     \gnArgHgt<\gnCornerHgt \gnArgHgt=0pt%
\else \advance \gnArgHgt by -\gnCornerHgt%
\fi \raise\gnArgHgt\hbox{$\ulcorner$} \box\gnBoxA %
\raise\gnArgHgt\hbox{$\urcorner$}}
\begin{document}

\title{Solovay's Completeness without Fixed Points.}
\author{Fedor Pakhomov\thanks{This work is supported by the Russian Science Foundation under grant 14-50-00005.}\\
  Steklov Mathematical Institute of Russian Academy of Sciences,\\ Moscow, Russia,\\
  \texttt{pakhfn@mi.ras.ru},\\
  \texttt{http://www.mi.ras.ru/{\textasciitilde}pakhfn/}}
\date{2017}
\maketitle
\begin{abstract}In this paper we present a new proof of Solovay's theorem on arithmetical completeness of Gödel-Löb provability logic $\mathsf{GL}$. Originally, completeness of $\mathsf{GL}$ with respect to interpretation of  $\Box$ as provability in $\mathsf{PA}$ was proved by R.~Solovay in 1976. The key part of Solovay's proof was his construction of an arithmetical evaluation for a given modal formula that made the formula unprovable in $\mathsf{PA}$ if it were unprovable in $\mathsf{GL}$. The arithmetical sentences for the evaluations were constructed using certain arithmetical fixed points. The method developed by Solovay have been used for establishing similar semantics for many other logics. In our proof we develop new more explicit construction of required evaluations that doesn't use any fixed points in their definitions. To our knowledge, it is the first alternative proof of the theorem that is essentially different from Solovay's proof in this key part.  
\end{abstract}

\section{Introduction} The study of provability as a modality could be traced back to at least as early as K.~Gödel work \cite{God33}. M.H.~Löb \cite{Lob55} have proved a generalization of Gödel's Second Incompleteness Theorem that is now known as Löb's Theorem. In order to formulate his theorem Löb have stated conditions on provability predicates that are now known as Hilbert-Bernays-Löb derivability conditions. Despite Löb haven't mentioned the interpretation of a modality as a provability predicate there, his conditions essentially corresponded to the standard axiomatization of modal logic $\mathsf{K4}$. Also note that arithmetical soundness of Gödel-Löb provability logic $\mathsf{GL}$ immediately follows from Löb's Theorem. 

The axioms of modal system $\mathsf{GL}$ have first appeared in \cite{Smi63}. K.~Segerberg have shown that $\mathsf{GL}$ is Kripke-complete and moreover that it is complete with respect to the class of all finite transitive irreflexive trees \cite{Seg71}. The arithmetical completeness of the system $\mathsf{GL}$ were established by R.M.~Solovay \cite{Sol76}. Solovay have proved that a modal formula $\varphi$ is a theorem of $\mathsf{GL}$ iff for every arithmetical evaluation $f(x)$ the arithmetical sentence $f(\varphi)$ is provable in $\mathsf{PA}$.

Latter modifications of Solovay's method were used in order to prove a lot of other similar results, we will mention just few of them. G.K.~Japaridze have proved arithmetical completeness of polymodal provability logic $\mathsf{GLP}$ \cite{Jap86}. V.Yu.~Shavrukov \cite{Sha88} and A.~Berarducci \cite{Ber90} have determined the interpretability logic of $\mathsf{PA}$. 

The key part of Solovay's proof was to show that in certain sense every finite $\mathsf{GL}$-model is ``embeddable'' in arithmetic. Using the construction of ``embeddings'', it is easy to construct evaluations $f_{\varphi}(x)$ such that  $\mathsf{PA}\nvdash f_{\varphi}(\varphi)$, for all $\mathsf{GL}$-unprovable  modal formulas $\varphi$. In order to construct the ``embeddings'', Solovay have used Diagonal Lemma to define certain primitive-recursive function (Solovay function), for every finite $\mathsf{GL}$ Kripke model. Then, using the functions, Solovay have defined the sentences that constituted the ``embeddings''.

D.~de Jongh, M.~Jumelet, and F.~Montagna have shown that $\mathsf{GL}$ is complete with respect to $\Sigma_1$-provability predicates for theories $\mathsf{T}\supseteq \mathsf{I\Delta_0}+\mathsf{Exp}$ \cite{JJM91}. Their proof have avoided the use of Solovay functions, however, their construction still ``emulated'' Solovay's approach using individual sentences constructed by Diagonal Lemma.

In a discussion on FOM (Foundation of Mathematics mailing list) J.~Shipman have asked a question about important theorems that have ``essentially'' only one proof \cite{FOM09}. The example of Solovay's theorem were provided by G.~Sambin. To the author knowledge, up to the date there were no proofs of Solovay's theorem that have avoided the central idea of Solovay's proof --- the Solovay's method of constructing required sentences in terms of certain fixed points.

We note that completeness of some extensions of $\mathsf{GL}$ with respect to interpretations of $\Box$ that are similar to formalized provability were proved by the completely different methods. Solovay in his paper \cite{Sol76} have briefly mentioned a method of determining modal logics of several natural interpretations of $\Box$ in set theory, namely for the interpretations of $\Box$ as ``to be true in all transitive models'' and as ``to be true in all models $\mathbf{V}_{\kappa}$, where $\kappa$ is an inaccessible cardinal'' (there are more detailed proofs in G.~Boolos book \cite[Chapter~13]{Boo95}). A modification of the method also have been used to show completeness of wide variety of extensions of $\mathsf{GL}$ with respect to artificially defined (not $\Sigma_1$) provability-like predicates \cite{Pak16}.

In the paper we present a new approach to the proof of arithmetical completeness theorem for $\mathsf{GL}$. We introduce a different method of ``embedding'' of finite $\mathsf{GL}$ Kripke models. As the result, the completeness of $\mathsf{GL}$ is achieved with the use of evaluations given by more explicitly constructed and more ``natural'' sentences (in particular, we do not rely on Diagonal Lemma in the construction). In order to avoid potential misunderstanding, we note that despite the sentences from evaluations are given explicitly, our proof rely on Gödel's Second Incompleteness Theorem and the results by P.~Pudlák \cite{Pud86} that were proved with the use of Diagonal Lemma.

Now we will give an example of unprovable $\mathsf{GL}$-formula $\varphi$ and an evaluation $f(x)$ provided by our proof such that $\mathsf{PA}\nvdash f(\varphi)$. We consider the formula $$\varphi\mathrel{\leftrightharpoons} \Diamond v \to (\Diamond u \to \Diamond (v\land u)).$$ We use the following definitions for numerical functions in order to define the evaluation $f(x)$:
$$\exp(x)=2^x,\;\; \log(x)=\max(\{y\mid \exp(y)\le x\}\cup 0),$$
$$\exp^{\star}(x)=\underbrace{\exp(\exp(\ldots\exp(}\limits_{\mbox{\scriptsize $x$ times}}0)\ldots)),\;\;\log^{\star}(x)=\max(\{y\mid \exp^{\star}(y)\le x\}\cup 0)$$
(note that the functions $\exp^{\star}(x)$ and $\log^{\star}(x)$ are called \emph{super exponentiation} and \emph{super logarithmic} functions, respectively). The evaluation $f(x)$ is given as following:
$$f(v)\mathrel{\leftrightharpoons} \exists x(\mathsf{Prf}(x,\gnmb{0=1})\land \forall y <x (\lnot \mathsf{Prf}(y,\gnmb{0=1}))\land  \log^{\star}(x)\equiv 0\;\; (\mathrm{mod}\;\; 2)),$$
$$f(u)\mathrel{\leftrightharpoons} \exists x(\mathsf{Prf}(x,\gnmb{0=1})\land \forall y <x (\lnot \mathsf{Prf}(y,\gnmb{0=1}))\land  \log^{\star}(x)\equiv 1\;\; (\mathrm{mod}\;\; 2)).$$

We note that that somewhat similar approach based on the parity of $\log^{\star}$ were used by Solovay in his letter to E.~Nelson \cite{Sol86}. Solovay proved that there are sentences  $\mathsf{F}$ and $\mathsf{G}$ such that $\mathsf{I\Delta_0+\Omega_1}+\mathsf{F}$ and $\mathsf{I\Delta_0+\Omega_1}+\mathsf{G}$  are cut-interpretable in $\mathsf{I\Delta_0+\Omega_1}$, but $\mathsf{I\Delta_0+\Omega_1}+ \mathsf{F}\land \mathsf{G}$ isn't cut-interpretable in $\mathsf{I\Delta_0+\Omega_1}$. Also, H.~Kotlarski in \cite{Kot96} have used an explicit parity-based construction of a pair of sentences in order to give an alternative proof for Rosser's Theorem.

\section{Preliminaries}
Let us first define Gödel-Löb provability logic $\mathsf{GL}$. The language of $\mathsf{GL}$ extends the language of propositional calculus with propositional constants $\top$ (truth) and $\bot$ (false) by the unary modal connective $\Box$. $\mathsf{GL}$ have the following Hilbert-style deductive system:
\begin{enumerate}
\item axiom schemes of classical propositional calculus $\mathsf{PC}$;
\item $\Box (\varphi \to \psi)\to(\Box \varphi \to \Box \psi)$;
\item $\Box (\Box \varphi \to \varphi)\to \Box \varphi$;
\item $\frac{\varphi\;\;\varphi\to\psi}{\psi}$;
\item $\frac{\varphi}{\Box\varphi}$.
\end{enumerate}
The expression $\Diamond \varphi$ is an abbreviation for $\lnot\Box\lnot \varphi$.

A set with a binary relation $(W,\prec)$ is called \emph{irreflexive transitive tree} if
\begin{enumerate}
\item $\prec$ is a transitive irreflexive relation;
\item there is an element $r\in W$ that is called the \emph{root} of $(W,\prec)$ such that the upward cone $\{a\mid r\prec a\}$ coincides with $W$;
\item for any element $w\in  W$ the restriction of $\prec$ on the downward cone $\{a\mid a\prec w\}$ is a strict well-ordering order.
\end{enumerate}
Segerberg \cite{Seg71} have shown that the logic $\mathsf{GL}$ is complete with respect to the class of all  finite irreflexive transitive trees. 

Our proof relies on the results by R.~Verbrugge and A.~Visser \cite{VerVis94} and indirectly on  the results by P.~Pudlák \cite{Pud86}. This results are sensitive to details of formalization of some metamathematical notions. Thus unlike some other papers, where this kind of details could be safely be left unspecified, we will need to be more careful here.

We identify syntactical expressions with binary strings. We encode binary strings by positive integers numbers. A positive integer $n$ of the form $1a_{k-1}\ldots a_0$ in binary notation encodes the binary string $a_{k-1}\ldots a_0$. We note that the binary logarithm $\log(n)$ of a number $n$ coincides with the length of the binary string that the number $n$ encodes. For a formula $\mathsf{F}$ the number $n$ that encodes $\mathsf{F}$ is known as the \emph{Gödel number} of $\mathsf{F}$.

A \emph{proof} of an arithmetical formula $\varphi$ in an arithmetical theory $\mathsf{T}$ is a list of arithmetical formulas such that it ends with $\varphi$ and every formula in the list is either an axiom of $\mathsf{T}$, or is an axiom of predicate calculus, or is obtained by inference rules from previous formulas.

We will be interested in formalization of provability in the theory $\mathsf{PA}$ and its extensions by finitely many axioms. We take the standard axiomatization of $\mathsf{PA}$ (by axioms of Robinson arithmetic $\mathsf{Q}$ and the induction schema). We consider the natural axiomatization in arithmetic of the property of a number to be the Gödel number of some axiom of $\mathsf{PA}$. For all  extensions $\mathsf{T}$ of $\mathsf{PA}$ by finitely many axioms this gives us $\Delta_0$-predicates $\mathsf{Prf}_{\mathsf{T}}(x,y)$ that are natural formalizations of ``$x$ is a proof of the formula with Gödel number $y$ in the theory $\mathsf{T}$'' that is based on the definition of the notion of proof given above. And we obtain $\Sigma_1$-provability predicates $$\mathsf{Prv}_{\mathsf{T}}(y)\leftrightharpoons \exists x\mathsf{Prf}_{\mathsf{T}}(x,y).$$

We will use effective binary numerals. The \emph{$n$-th numeral} is defined as follows:
\begin{enumerate}
\item $\underline{0}$ is the term $0$;
\item $\underline{1}$ is the term $1$;
\item $\underline{2n}$ is the term $(1+1)\mathop{\cdot}\underline{n}$;
\item $\underline{2n+1}$ is the term $(1+1)\mathop{\cdot}\underline{n}+1$.
\end{enumerate}
Clearly, the length of $\underline{n}$ is $\mathcal{O}(\log(n))$.

For an arithmetical formula $\mathsf{F}$ we denote by $\gnmb{\mathsf{F}}$ the $n$-th numeral, where $n$ is the Gödel number of the formula $\mathsf{F}$.

We denote by $\mathsf{Prv}(x)$ and $\mathsf{Prf}(x,y)$ the predicates  $\mathsf{Prv}_{\mathsf{PA}}(x)$ and $\mathsf{Prf}_{\mathsf{PA}}(x,y)$.

An \emph{arithmetical evaluation} is a function $f(x)$ from $\mathsf{GL}$ formulas to the sentences of the language of first-order arithmetic such that
\begin{enumerate}
  \item $f(\varphi\land\psi)\mathrel{\leftrightharpoons}f(\varphi)\land f(\psi)$;
  \item $f(\varphi\lor\psi)\mathrel{\leftrightharpoons}f(\varphi)\lor f(\psi)$;
  \item $f(\lnot\varphi)\mathrel{\leftrightharpoons}\lnot f(\varphi)$;
  \item $f(\varphi\to\psi)\mathrel{\leftrightharpoons} f(\varphi\to\psi)$;
  \item $f(\top)\mathrel{\leftrightharpoons} 0=0$;
  \item $f(\bot)\mathrel{\leftrightharpoons} 0=1$;
  \item $f(\Box \varphi)\mathrel{\leftrightharpoons} \mathsf{Prv}(\gnmb{f(\varphi)})$.
  \end{enumerate}
Note that an arithmetical evaluation is uniquely determined by its values on propositional variables $u,v,\ldots$.
  
We will use $\top$, $\bot$, $\Box$, and $\Diamond$ within arithmetical formulas: the expression $\top$ is an abbreviation for $0=0$, the expression $\bot$ is an abbreviation for $0=1$, the expression $\Box \mathsf{F}$ is an abbreviation for $\mathsf{Prv}(\gnmb{\mathsf{F}})$, and the expression $\Diamond\mathsf{F}$ is an abbreviation for $\lnot \mathsf{Prv}(\gnmb{\lnot\mathsf{F}})$. The expressions of the form $\Box^n \mathsf{F}$ and $\Diamond^n \mathsf{F}$ are abbreviations for $\underbrace{\Box\Box\ldots\Box}\limits_{\mbox{\scriptsize $n$ times}}\mathsf{F}$ and $\underbrace{\Diamond\Diamond\ldots\Diamond}\limits_{\mbox{\scriptsize $n$ times}}\mathsf{F}$, respectively.

\section{Proof of Solovay's Theorem}\label{proof_section}
In the section we will just give a proof of ``completeness part'' of Solovay's theorem. Soundness of the logic $\mathsf{GL}$ essentially is due to Löb \cite{Lob55} and we refer a reader to Boolos book \cite[Chapter~3]{Boo95} for a detailed proof. 
\begin{theorem} \label{Solovay_completenes} If a modal formula $\varphi$ is not provable in $\mathsf{GL}$ then there exists an arithmetical evaluation $f(x)$ such that $\PA\nvdash f(\varphi)$.
\end{theorem}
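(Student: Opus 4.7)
The plan is to reduce to Kripke semantics and then build the ``embedding'' sentences explicitly, without a diagonal fixed point. By Segerberg's theorem, since $\varphi$ is unprovable in $\mathsf{GL}$ there is a finite irreflexive transitive tree $(W,\prec,\Vdash)$ with root $r$ such that $r \not\Vdash \varphi$. I would aim to attach to each node $w\in W$ an arithmetical sentence $\sigma_w$ (of low complexity, built directly from the $\Delta_0$-predicate $\mathsf{Prf}$) so that, provably in $\mathsf{PA}$, the family $\{\sigma_w\}_{w\in W}$ partitions the non-consistent worlds into a pattern isomorphic to $(W,\prec)$. The evaluation would then be
$$f(p)\;\mathrel{\leftrightharpoons}\;\bigvee_{w\Vdash p}\sigma_w.$$

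The key design step is the explicit construction of $\sigma_w$ in the spirit of the introductory example: each $\sigma_w$ asserts that a least $\mathsf{PA}$-proof $x$ of $0{=}1$ exists and that a tuple of parities of iterated $\log^{\star}$-type quantities computed from $x$ equals a specific bit-pattern associated with $w$. Since $\mathsf{PA}$ cannot prove its own consistency, $\mathsf{PA}$ cannot refute any single bit-pattern, so each $\sigma_w$ is consistent with $\mathsf{PA}$; and since $\log^{\star}$ takes sharply bounded values on numbers of bounded length, the arithmetic of ``passing from the current bit-pattern to a shorter proof with a different pattern'' can be internally tracked. The successors of $w$ in the tree would correspond exactly to those patterns obtainable from $w$'s pattern by ``extending with more bits'' — so that, informally, moving up the tree corresponds to having a shorter putative proof.

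The verification would then proceed by induction on modal subformulas $\psi$ of $\varphi$, showing
$$w\Vdash\psi\;\Longleftrightarrow\;\mathsf{PA}\vdash \sigma_w\to f(\psi)\quad\text{and}\quad w\not\Vdash\psi\;\Longleftrightarrow\;\mathsf{PA}\vdash \sigma_w\to\lnot f(\psi).$$
The propositional cases are immediate from how $f$ is defined. The modal case $\psi=\Box\chi$ reduces to two provability claims: $\sigma_w\to \Box\bigvee_{w\prec w'}\sigma_{w'}$ (so if we are at $w$ then $\mathsf{PA}$ proves we will be at an accessible node) and $\sigma_w\to\lnot\Box\lnot\sigma_{w'}$ for each $w'$ with $w\prec w'$ (so each accessible node remains consistent). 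Because the $\sigma_w$ are $\Sigma_1$ and built from $\Delta_0$ data with effective binary numerals, the Verbrugge--Visser refinement of provable $\Sigma_1$-completeness, which in turn relies on Pudlák's polynomial bounds on cut-free $\mathsf{PA}$-proofs, supplies precisely the short internal proofs needed to discharge these implications. Applying the equivalence at the root and using that $\sigma_r$ is consistent with $\mathsf{PA}$ then yields $\mathsf{PA}\nvdash f(\varphi)$.

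The principal difficulty is the explicit design of the parity pattern assigned to each node so that the two modal clauses above become provable in $\mathsf{PA}$ without invoking any self-referential fixed point. One must choose the iterated $\log^{\star}$-parities so that: (i) their joint behaviour on the least proof of $0{=}1$ is provably a partition, (ii) the ``descent'' relation among patterns provably coincides with $\prec$, and (iii) $\mathsf{PA}$ can carry out this bookkeeping on numerals of only polylogarithmic length, so that Pudlák-style feasibility suffices. Once this combinatorial core is in place, the rest of the argument — Segerberg reduction, inductive embedding, and the final contradiction — is essentially bookkeeping.
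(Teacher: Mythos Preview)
Your proposal has a genuine gap at the heart of the construction. As you describe them, every sentence $\sigma_w$ asserts the existence of a least $\mathsf{PA}$-proof of $0{=}1$; in particular each $\sigma_w$ implies $\exists x\,\mathsf{Prf}(x,\gnmb{0{=}1})$, i.e.\ $\Box\bot$. By provable $\Sigma_1$-completeness, $\mathsf{PA}\vdash \sigma_w\to\Box\bot$, hence $\mathsf{PA}\vdash \sigma_w\to\Box\psi$ for \emph{every} $\psi$, and in particular $\mathsf{PA}\vdash \sigma_w\to\Box\lnot\sigma_{w'}$. This flatly contradicts the clause you need for the modal step, namely $\mathsf{PA}\vdash \sigma_w\to\Diamond\sigma_{w'}$ for each $w\prec w'$. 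So as soon as the tree has height $\ge 1$, either $\sigma_r$ is inconsistent with $\mathsf{PA}$ or the modal inductive step fails. The single ``least $\mathsf{PA}$-proof of $0{=}1$'' cannot by itself carry a nontrivial accessibility relation; once it exists, $\Box$ collapses.

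The paper avoids exactly this collapse by stratifying: the sentences $\mathsf{C}_b$ at level $h(a){-}1$ speak about the least proof of $0{=}1$ in $\mathsf{PA}+\Diamond^{h(a)-1}\top$, and the world-sentences $\mathsf{F}_a$ additionally carry the consistency clause $\Diamond^{h(a)}\top$. Thus $\mathsf{F}_a$ does \emph{not} imply $\Box\bot$; it only implies $\Box^{h(a)+1}\bot$, which leaves room for $\Diamond\mathsf{F}_b$ when $h(b)<h(a)$. Second, the implication $\mathsf{F}_a\to\Diamond\mathsf{F}_b$ is not obtained from any ``short internal proofs'' or feasible $\Sigma_1$-completeness argument; it is the hard direction and is proved model-theoretically via the Injecting Inconsistencies Theorem of Verbrugge--Visser (formalized in $\mathsf{ACA}_0$), which manufactures a model where the least proof of inconsistency lands in a prescribed $\log^{\star}$ residue class. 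What you cite as ``the Verbrugge--Visser refinement of provable $\Sigma_1$-completeness'' is not the relevant tool; Pudl\'ak's bounds enter only indirectly, inside the proof of that injection theorem. Your outline correctly identifies the Segerberg reduction and the inductive bookkeeping, but the design of the $\sigma_w$ and the mechanism for the $\Diamond$-direction both need to be replaced.
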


Let us fix some modal formula $\varphi$ that is not provable in $\mathsf{GL}$. By Segerberg's result \cite{Seg71}, we can find a finite transitive irreflexive tree $\mathfrak{F}=(W,\prec)$ such that $r$ is the root of $\mathfrak{F}$ and there is a model $\mathbf{M}$ on $\mathfrak{F}$ with $\mathbf{M},r\nVdash \varphi$. For all the worlds $a$ of $\mathfrak{F}$ we denote by $h(a)$ their ``height'': $$h(a)=\sup (\{0\}\cup\{h(b)+1\mid a \prec b\}).$$

Let us assign arithmetical sentences $\mathsf{C}_a$ to all the worlds $a$ of $\mathfrak{F}$. We put $\mathsf{C}_r$ to be $0=0$. We consider a non-leaf world $a$ and assign sentences $\mathsf{C}_b$ to all its immediate successors $b$. Suppose $b_0,\ldots,b_n$ are all the immediate successors of $a$. We fix some enumeration $b_0,\ldots,b_n$ such that  $h(b_n)=h(a)-1$.  For $i<n$ we put $\mathsf{C}_{b_{i}}$ to be the sentence
$$\begin{aligned}\exists x ( \mathsf{Prf}_{\mathsf{PA}+\Diamond^{h(a)-1}\top}(x,\gnmb{0=1})& \land \forall y<x (\lnot \mathsf{Prf}_{\mathsf{PA}+\Diamond^{h(a)-1}\top}(y,\gnmb{0=1}))\\ & \land \log^{\star}(x)\equiv i\;\; (\mathrm{mod}\;\; n+1)\\ & \land \exists y<\exp(\exp(x))(\mathsf{Prf}_{\mathsf{PA}+\Diamond^{h(b_i)}\top}(y,\gnmb{0=1}))).\end{aligned}$$  

The sentence $\mathsf{C}_{b_n}$ is $$\Box^{h(a)}\bot \land \bigwedge\limits_{i<n} \lnot \mathsf{C}_{b_i}.$$ Note that $\PA\vdash \lnot (\mathsf{C}_{b_i}\land \mathsf{C}_{b_j})$, for $i\ne j$ and $$\PA \vdash \Box^{h(a)}\bot \mathrel{\leftrightarrow}\bigvee\limits_{i\le n} \mathsf{C}_{b_i}.$$ We note that all $\mathsf{C}_{b_i}$ are $\mathsf{PA}$-equivalent to $\Sigma_1$-sentences: it is obvious for $i\ne n$ and $\mathsf{C}_{b_n}$ is equivalent to $\Sigma_1$-sentence since it states that there is a $\mathsf{PA}+\Diamond^{h(a)-1}\top$-proof of $0=1$ and in addition it states that the least $\mathsf{PA}+\Diamond^{h(a)-1}\top$-proof of $0=1$ satisfy certain $\Delta_0(\exp)$-property.

We assign sentences $\mathsf{F}_a$ to all the worlds $a$ of $\mathfrak{F}$. The sentence $\mathsf{F}_a$ is
$$\bigwedge\limits_{b \preceq a} \mathsf{C}_b \land \Diamond^{h(a)}\top.$$
It is easy to see that the disjunction of all $\mathsf{F}_a$'s is provable in $\mathsf{PA}$ and any conjunction $\mathsf{F}_a\land \mathsf{F}_b$, for $a\ne b$, is disprovable in $\mathsf{PA}$.

\begin{lemma}\label{diamond_lemma}For any set of worlds $A$ we have
$$\mathsf{PA}+\Box^{h(r)+1}\bot\vdash \Diamond \Big(\bigvee\limits_{a\in A}\mathsf{F}_a\Big) \mathrel{\leftrightarrow} \bigvee\limits_{b,\exists a\in A(b\prec a)}\mathsf{F}_b.$$
\end{lemma}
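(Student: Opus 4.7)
The plan is to reduce the biconditional to the single-world statement
$$ \mathsf{PA}+\Box^{h(r)+1}\bot \vdash \Diamond \mathsf{F}_a \leftrightarrow \bigvee_{b\prec a}\mathsf{F}_b \qquad(\star) $$
for every $a\in W$, and then assemble the lemma from the PA-provable distributivity $\Diamond(\varphi\lor\psi)\leftrightarrow\Diamond\varphi\lor\Diamond\psi$: unfold $\Diamond\bigvee_{a\in A}\mathsf{F}_a$ to $\bigvee_{a\in A}\Diamond\mathsf{F}_a$, apply $(\star)$ to each disjunct, and collapse the resulting double disjunction to $\bigvee_{b,\exists a\in A(b\prec a)}\mathsf{F}_b$. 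For the forward direction of $(\star)$ I will use that $\mathsf{PA}$ proves $\bigvee_{c\in W}\mathsf{F}_c$ and the pairwise disjointness of the $\mathsf{F}_c$'s, so inside $\mathsf{PA}+\Box^{h(r)+1}\bot$ exactly one $\mathsf{F}_c$ holds; if $c\not\prec a$ I will derive $\Box\lnot\mathsf{F}_a$, contradicting $\Diamond\mathsf{F}_a$. Two subcases: (i) if $c=a$ or $a\prec c$, then $h(c)\le h(a)$, and $\Box^{h(c)+1}\bot$ holds either as the external assumption (when $c=r$) or via $\mathsf{F}_c\vdash\mathsf{C}_c\vdash\Box^{h(c)+1}\bot$ (when $c\ne r$); then $\Box^{h(c)+1}\bot\vdash\Box^{h(a)+1}\bot=\Box\lnot\Diamond^{h(a)}\top\vdash\Box\lnot\mathsf{F}_a$. (ii) If $c$ and $a$ are incomparable, they diverge at a common ancestor $d$ into distinct children $b_i$ (on the way to $c$) and $b_j$ (on the way to $a$); then $\mathsf{F}_c\vdash\mathsf{C}_{b_i}$, and since $\mathsf{C}_{b_i}$ is $\mathsf{PA}$-equivalent to a $\Sigma_1$-sentence, $\Sigma_1$-completeness yields $\Box\mathsf{C}_{b_i}$, which together with the $\mathsf{PA}$-disprovability of $\mathsf{C}_{b_i}\land\mathsf{C}_{b_j}$ gives $\Box\lnot\mathsf{C}_{b_j}\vdash\Box\lnot\mathsf{F}_a$.

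For the backward direction of $(\star)$ I will prove the single-step statement $\mathsf{PA}\vdash\mathsf{F}_b\to\Diamond\mathsf{F}_{b'}$ for every immediate successor $b'$ of $b$, and then chain along a path $b=e_0\prec e_1\prec\cdots\prec e_k=a$ using the $\mathsf{PA}$-theorem $\Diamond\Diamond\psi\to\Diamond\psi$ (equivalent to $\Box\varphi\to\Box\Box\varphi$) together with internal necessitation: from $\mathsf{PA}\vdash\mathsf{F}_{e_j}\to\Diamond\mathsf{F}_{e_{j+1}}$ we extract $\Box(\mathsf{F}_{e_j}\to\Diamond\mathsf{F}_{e_{j+1}})$, whence $\Diamond\mathsf{F}_{e_j}\to\Diamond\Diamond\mathsf{F}_{e_{j+1}}\to\Diamond\mathsf{F}_{e_{j+1}}$, and the chain telescopes to $\mathsf{F}_b\to\Diamond\mathsf{F}_a$. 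In the single step, $\Sigma_1$-completeness applied to $\bigwedge_{c\preceq b}\mathsf{C}_c$ gives $\mathsf{F}_b\vdash\Box\bigwedge_{c\preceq b}\mathsf{C}_c$, reducing the target to $\mathsf{F}_b\vdash\Diamond(\mathsf{C}_{b'}\land\Diamond^{h(b')}\top)$; and $\mathsf{F}_b\vdash\Diamond^{h(b)}\top$ already supplies $\Diamond\Diamond^{h(b')}\top$, i.e., bare consistency of $\mathsf{PA}+\Diamond^{h(b')}\top$.

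The main obstacle is then to upgrade this bare consistency so as also to validate the $\Sigma_1$-sentence $\mathsf{C}_{b'}$. When $b'=b_i$ with $i<n$, this requires a model of $\mathsf{PA}+\Diamond^{h(b')}\top$ in which the least inconsistency proof $x$ of the stronger theory $\mathsf{PA}+\Diamond^{h(b)-1}\top$ realizes the prescribed residue $\log^\star(x)\equiv i\pmod{n+1}$ and in which $\exp(\exp(x))$ majorizes some inconsistency proof of the weaker theory; when $b'=b_n$, it requires the consistency of $\Box^{h(b)}\bot\land\bigwedge_{j<n}\lnot\mathsf{C}_{b_j}$ with $\Diamond^{h(b')}\top$. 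Both facts boil down to the flexibility of $\log^\star$ of the least inconsistency proof of the stronger theory modulo any fixed constant, together with the existence of super-exponentially shorter inconsistency proofs in the weaker theory, which is precisely the content of Pudlák's speed-up results \cite{Pud86} and their formalization by Verbrugge and Visser \cite{VerVis94}; this appeal is the technical heart of the argument.
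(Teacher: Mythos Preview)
Your proposal is correct and follows essentially the same route as the paper: reduce Lemma~\ref{diamond_lemma} via $\Diamond$-distributivity to the two single-world implications (the paper's Lemmas~2 and~3), handle the forward direction using the $\Sigma_1$-nature of the $\mathsf{C}_c$'s and height comparisons, and handle the backward direction by proving the immediate-successor step with the formalized Injecting Inconsistencies Theorem (Theorem~\ref{implanting_inconsistencies}) and then chaining via $\Diamond\Diamond\to\Diamond$. The only cosmetic difference is that your forward direction is organized as a case split on which $\mathsf{F}_c$ holds, whereas the paper walks along the chain $r=c_0\prec\cdots\prec c_n=a$ to locate the largest $k$ with $\mathsf{C}_{c_k}$; the underlying ideas (and in particular the use of $\mathsf{C}_c\vdash\Box^{h(c)+1}\bot$ and $\Sigma_1$-completeness) are the same.
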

Let us first prove Theorem \ref{Solovay_completenes}  using Lemma \ref{diamond_lemma} and only then prove the lemma.

\begin{proof}For a variable $v$ we assign the evaluation $f(v)$:
$$\bigvee\limits_{\mathbf{M},a\Vdash v}\mathsf{F}_a.$$
By induction on the length of modal formulas $\psi$ we prove that $$\mathsf{PA}+\Box^{h(r)+1}\bot\vdash f(\psi) \mathrel{\leftrightarrow} \bigvee\limits_{\mathbf{M},a\Vdash \psi}\mathsf{F}_a.$$
The only non-trivial case for the induction step is when the topmost connective of $\psi$ is modality. Assume $\psi$ is of the form $\Box \chi$. From inductive assumption we know that $$\mathsf{PA}\vdash \Box ^{h(r)+1}\bot \to (f(\chi) \mathrel{\leftrightarrow} \bigvee\limits_{\mathbf{M},a\Vdash \chi}\mathsf{F}_a).$$ We use Lemma \ref{diamond_lemma}:
$$\begin{aligned}
\mathsf{PA}+\Box^{h(r)+1}\bot\vdash f(\Box\chi) & \mathrel{\leftrightarrow} \Box (f(\chi))\\
                                                        & \mathrel{\leftrightarrow} \Box (\Box^{h(r)+1}\bot\land f(\chi))\\
                                                        & \mathrel{\leftrightarrow} \Box (\Box^{h(r)+1}\bot \land \bigvee\limits_{\mathbf{M},a\Vdash \chi}\mathsf{F}_a)\\
                                                        & \mathrel{\leftrightarrow} \Box (\bigvee\limits_{\mathbf{M},a\Vdash \chi}\mathsf{F}_a)\\
                                                        & \mathrel{\leftrightarrow} \Box (\lnot \bigvee\limits_{\mathbf{M},a\Vdash \lnot\chi}\mathsf{F}_a)\\
                                                        & \mathrel{\leftrightarrow} \lnot \Diamond (\bigvee\limits_{\mathbf{M},a\Vdash \lnot\chi}\mathsf{F}_a)\\
                                                        & \mathrel{\leftrightarrow} \lnot \bigvee\limits_{\mathbf{M},a\Vdash \Diamond\lnot \chi}\mathsf{F}_a.\\
                                                        & \mathrel{\leftrightarrow} \bigvee\limits_{\mathbf{M},a\Vdash \Box\chi}\mathsf{F}_a.\\
\end{aligned}$$

Therefore, $$\mathsf{PA}+\Box^{h(r)+1}\bot\vdash f(\varphi)  \mathrel{\leftrightarrow} \bigvee\limits_{\mathbf{M},a\Vdash \varphi}\mathsf{F}_a.$$
Since $\mathbf{M},r\nVdash \varphi$, we have $\mathsf{PA}+\Box^{h(r)+1}\bot+\mathsf{F}_r\vdash \lnot f(\varphi)$. The sentence $\mathsf{F}_r$ is just equivalent to $\Diamond^{h(r)}\top$. Hence, by Gödel's Second Incompleteness Theorem for $\mathsf{PA}+ \Diamond^{h(r)}\top$, the theory $\mathsf{PA}+\Box^{h(r)+1}\bot+\mathsf{F}_r$ is consistent. Therefore, $\lnot f(\varphi)$ is consistent with $\mathsf{PA}$ and thus $\mathsf{PA}\nvdash f(\varphi)$.
\end{proof}

In order to prove Lemma \ref{diamond_lemma}, clearly, it will be  enough to prove the following two lemmas:
\begin{lemma}For any world $a$ from $\mathfrak{F}$, we have $$\mathsf{PA}+\Box^{h(r)+1}\bot\vdash \Diamond \mathsf{F}_a \to \bigvee \limits_{b\prec a}\mathsf{F}_b.$$
\end{lemma}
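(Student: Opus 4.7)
The plan is to dualize using the facts established above that $\mathsf{PA}\vdash\bigvee_c \mathsf{F}_c$ and that the $\mathsf{F}_c$'s are pairwise $\mathsf{PA}$-inconsistent: it suffices to show, for every world $c$ with $c\not\prec a$, that $\mathsf{PA}+\Box^{h(r)+1}\bot\vdash \mathsf{F}_c\to \Box\lnot\mathsf{F}_a$. Such a $c$ is either $a$ itself, a strict $\prec$-descendant of $a$, or $\prec$-incomparable with $a$. The case $a=r$ reduces immediately to the hypothesis: the right-hand disjunction is then empty and $\mathsf{F}_r=\Diamond^{h(r)}\top$, so $\Box\lnot\mathsf{F}_r$ is just $\Box^{h(r)+1}\bot$. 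Henceforth assume $a\neq r$.

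For the sub-cases with $c\neq a$ the argument is a routine application of provable $\Sigma_1$-completeness to the $\mathsf{C}_b$'s (each of which is $\mathsf{PA}$-equivalent to a $\Sigma_1$-sentence). If $a,c$ are $\prec$-incomparable, take their deepest common ancestor $p$ and its two distinct children $a_1,c_1$ with $a_1\preceq a$, $c_1\preceq c$; then $\mathsf{F}_a\to\mathsf{C}_{a_1}$ and $\mathsf{F}_c\to\mathsf{C}_{c_1}$, while sibling $\mathsf{C}$-sentences are pairwise $\mathsf{PA}$-inconsistent, so $\mathsf{F}_c\to \Box\mathsf{C}_{c_1}\to \Box\lnot\mathsf{C}_{a_1}\to \Box\lnot\mathsf{F}_a$. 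If $a\prec c$, let $b'$ be the immediate successor of $a$ on the path to $c$; in either case of its definition $\mathsf{C}_{b'}$ provably entails $\Box^{h(a)}\bot$ (either by the initial proof-existence clause, which is $\mathsf{PA}$-equivalent to $\Box^{h(a)}\bot$, or by the explicit first conjunct $\Box^{h(a)}\bot$ when $b'$ is the $b_n$-type sibling), so $\mathsf{F}_c\to\Box\mathsf{C}_{b'}\to \Box\Box^{h(a)}\bot\to \Box\lnot\mathsf{F}_a$.

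The substantive case, and the main obstacle, is $c=a$ (with $a\neq r$), where we must show $\mathsf{F}_a\to \Box\lnot\mathsf{F}_a$. The crucial observation is that $\mathsf{C}_a$ itself provably implies $\Box^{h(a)+1}\bot$ in $\mathsf{PA}$: if $a$ is the maximal-height child $b_n$ of its parent $p$ then $\mathsf{C}_a$ begins with $\Box^{h(p)}\bot = \Box^{h(a)+1}\bot$; otherwise the bounded existential $\exists y<\exp(\exp(x))(\mathsf{Prf}_{\mathsf{PA}+\Diamond^{h(a)}\top}(y,\gnmb{0=1}))$ inside $\mathsf{C}_a$ witnesses $\mathsf{Prv}_{\mathsf{PA}+\Diamond^{h(a)}\top}(\gnmb{0=1})$, which is $\mathsf{PA}$-equivalent to $\Box^{h(a)+1}\bot$. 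Separately, provable $\Sigma_1$-completeness applied conjunct-wise to the $\mathsf{C}_b$'s along the path yields $\mathsf{F}_a\to \Box\bigwedge_{b\preceq a}\mathsf{C}_b$. Combining, $\mathsf{F}_a\to \Box(\bigwedge_{b\preceq a}\mathsf{C}_b\land \Box^{h(a)}\bot)$, and the formula inside $\Box$ directly contradicts the $\Diamond^{h(a)}\top$ conjunct of $\mathsf{F}_a$, giving $\mathsf{F}_a\to \Box\lnot\mathsf{F}_a$. This is precisely where the $\log^{\star}$-parity design of $\mathsf{C}_a$ performs the job of Solovay's fixed point: $\mathsf{C}_a$ is simultaneously $\Sigma_1$-equivalent (so that provable $\Sigma_1$-completeness applies) and strong enough to entail the one-step-stronger $\Box^{h(a)+1}\bot$ rather than merely $\Box^{h(a)}\bot$, which is exactly what is needed to kill the consistency witness $\Diamond^{h(a)}\top$ living inside $\mathsf{F}_a$ under the outer $\Box$.
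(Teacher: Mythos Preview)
Your argument is correct. The contrapositive reduction to showing $\mathsf{F}_c\to\Box\lnot\mathsf{F}_a$ for every $c\not\prec a$ is sound, and each of your three cases goes through: the incomparable case via sibling-incompatibility of the $\mathsf{C}$'s, the case $a\prec c$ via the fact that any $\mathsf{C}_{b'}$ for a child $b'$ of $a$ already entails $\Box^{h(a)}\bot$, and the case $c=a$ via your observation that $\mathsf{C}_a$ entails $\Box^{h(a)+1}\bot$.

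The paper proceeds differently. Instead of eliminating all worlds $c\not\prec a$, it argues directly inside $\mathsf{PA}+\Box^{h(r)+1}\bot+\Diamond\mathsf{F}_a$ and \emph{locates} the actual witness $b\prec a$ with $\mathsf{F}_b$: along the chain $r=c_0\prec\cdots\prec c_n=a$ it takes the largest $k$ with $\mathsf{C}_{c_k}$, shows (using $\Sigma_1$-completeness together with $\Diamond\mathsf{F}_a$) that $\Box^{h(c_{i-1})}\bot\to\mathsf{C}_{c_i}$, whence all $\mathsf{C}_{c_i}$ for $i\le k$ hold; from $\Diamond\mathsf{F}_a$ it gets $\Diamond^{h(a)+1}\top$ and hence $\lnot\mathsf{C}_a$, so $k<n$; finally maximality of $k$ yields $\Diamond^{h(c_k)}\top$ and thus $\mathsf{F}_{c_k}$. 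The two proofs use the same ingredients (pairwise incompatibility of siblings, $\Sigma_1$-equivalence of the $\mathsf{C}$'s, and the implication $\mathsf{C}_a\to\Box^{h(a)+1}\bot$, which the paper invokes implicitly at the step ``$\Diamond^{h(a)+1}\top$, thus $\lnot\mathsf{C}_a$''), but organize them differently: the paper's argument is constructive along a single chain, while yours is a global case split over all worlds. Your decomposition is perhaps slightly longer but makes the role of the implication $\mathsf{C}_a\to\Box^{h(a)+1}\bot$ very explicit; the paper's version more directly exhibits which $\mathsf{F}_b$ actually holds. (A minor simplification in your case $c=a$: once you have $\Box\,\Box^{h(a)}\bot$ you are done, so the conjunct $\Box\bigwedge_{b\preceq a}\mathsf{C}_b$ is not needed there.)
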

\begin{proof}
  Let us reason in $\mathsf{PA}+\Box^{h(r)+1}\bot$. Assume $\Diamond \mathsf{F}_a$. We need to prove $\bigvee \limits_{b\prec a}\mathsf{F}_b$. Let us denote by $r=c_0 \prec c_1 \prec \ldots \prec c_n=a$ the maximal chain from $r$ to $a$.  Let us find the greatest $k$ such that $\mathsf{C}_{c_k}$ holds.

  Note that for any $1\le i \le n$ the sentence $\Box^{h(c_{i-1})}\bot$ implies $\mathsf{C}_{c_i}$. Indeed, $\Box^{h(c_{i-1})}\bot$ implies that $\mathsf{C}_{c}$ for some immediate successor $c$ of $c_{i-1}$. But since $\mathsf{C}_c$ is $\Sigma_1$ and we assumed $\Diamond \mathsf{F}_a$, we would have $\Diamond (\mathsf{F}_a \land \mathsf{C}_c)$, which is possible only for $c=c_i$.

  By a simple check of cases $k=0$ and $k\ne 0$ we obtain $\Box^{h(c_k)+1}\bot$. Therefore, for all $i<k$, we have $\Box^{h(c_i)}\bot$ and hence, for all $i\le k$, the sentence $\mathsf{C}_{c_i}$ holds. From $\Box( \mathsf{F}_a\to \Diamond^{h(a)}\top)$ and $\Diamond \mathsf{F}_a$ we derive $\Diamond^{h(a)+1}\top$. Thus, $\lnot \mathsf{C}_a$ and hence $k<n$. Since $\Box^{h(c_k)}\bot$ implies $C_{c_{k+1}}$, we have $\Diamond^{h(c_k)}\top$. Therefore the sentence $\mathsf{F}_{c_k}$ holds and finally we derive $\bigvee \limits_{b\prec a}\mathsf{F}_b$.
\end{proof}

\begin{lemma} \label{constructing_models}For any worlds $a\prec b$,  we have $\mathsf{PA}+\Box^{h(r)+1}\bot\vdash \mathsf{F}_a \to \Diamond \mathsf{F}_b$.
\end{lemma}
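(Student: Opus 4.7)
The plan is to reduce to the case that $b$ is an immediate successor of $a$, and then to verify that case via a parity-controlled consistency argument.

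For the reduction, I induct on the length of the chain from $a$ to $b$ in $\mathfrak{F}$. If $b$ is not an immediate successor of $a$, let $c$ be the unique immediate successor of $a$ lying on the path to $b$. The immediate-successor case (treated below) gives $\mathsf{PA}+\Box^{h(r)+1}\bot\vdash \mathsf{F}_a\to\Diamond \mathsf{F}_c$, and the inductive hypothesis gives $\mathsf{PA}+\Box^{h(r)+1}\bot\vdash \mathsf{F}_c\to\Diamond\mathsf{F}_b$. Necessitating the latter and applying $\Box^{h(r)+1}\bot\to\Box\Box^{h(r)+1}\bot$ (formalized $\Sigma_1$-completeness for the $\Sigma_1$-sentence $\Box^{h(r)+1}\bot$) yields $\Box(\mathsf{F}_c\to\Diamond\mathsf{F}_b)$ inside the ambient theory. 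Combined with $\Diamond\mathsf{F}_c$ this produces $\Diamond\Diamond\mathsf{F}_b$, which collapses to $\Diamond\mathsf{F}_b$ by the $\mathsf{K4}$-axiom $\Box\lnot\mathsf{F}_b\to\Box\Box\lnot\mathsf{F}_b$ (again formalized $\Sigma_1$-completeness).

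For the immediate-successor case, fix $b=b_i$ with siblings $b_0,\ldots,b_n$, and reason inside $\mathsf{PA}+\Box^{h(r)+1}\bot+\mathsf{F}_a$. The conjunction $\bigwedge_{c\preceq a}\mathsf{C}_c$ is $\mathsf{PA}$-equivalent to a $\Sigma_1$-sentence, so formalized $\Sigma_1$-completeness delivers $\Box\bigwedge_{c\preceq a}\mathsf{C}_c$. By the modal principle $\Box\psi\land\Diamond\chi\to\Diamond(\psi\land\chi)$, the goal $\Diamond\mathsf{F}_b=\Diamond\bigl(\bigwedge_{c\preceq a}\mathsf{C}_c\land\mathsf{C}_b\land\Diamond^{h(b)}\top\bigr)$ reduces to showing $\Diamond(\mathsf{C}_b\land\Diamond^{h(b)}\top)$, i.e., consistency of $\mathsf{PA}+\mathsf{C}_b+\Diamond^{h(b)}\top$.

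The heart of the proof, and the main obstacle, is this residual consistency claim. For $i<n$ one must exhibit a situation in which $\mathsf{PA}+\Diamond^{h(a)-1}\top$ is internally inconsistent, with a least inconsistency proof $x$ satisfying $\log^{\star}(x)\equiv i\pmod{n+1}$ and with an inconsistency proof of $\mathsf{PA}+\Diamond^{h(b_i)}\top$ of length less than $\exp(\exp(x))$, while $\mathsf{PA}+\Diamond^{h(b_i)-1}\top$ remains truly consistent. For $i=n$ the goal reduces, via the definition of $\mathsf{C}_{b_n}$, to producing a least $\mathsf{PA}+\Diamond^{h(a)-1}\top$-inconsistency proof whose $\log^{\star}$-parity is $\equiv n\pmod{n+1}$, combined with Gödel's Second Incompleteness Theorem for $\mathsf{PA}+\Diamond^{h(a)-1}\top$, whose consistency is secured by the $\Diamond^{h(a)}\top$ conjunct of $\mathsf{F}_a$. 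Plain $G2$ supplies nonstandard inconsistency proofs but offers no control over their $\log^{\star}$-parity; the parity-controlled existence statements required here are precisely what the results of Verbrugge–Visser, resting ultimately on Pudlák's proof-size bounds, provide.
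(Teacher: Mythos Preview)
Your reduction to the immediate-successor case and your reduction of $\Diamond\mathsf{F}_b$ to $\Diamond(\mathsf{C}_b\land\Diamond^{h(b)}\top)$ via $\Sigma_1$-completeness are correct and match the paper's strategy exactly. You also correctly identify the Verbrugge--Visser Injecting Inconsistencies Theorem as the tool that does the real work. However, your last paragraph is only a pointer, not a proof: you name the tool but do not apply it, and the application is not routine.

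Concretely, two things are missing. First, the Injecting Inconsistencies Theorem does not directly let you prescribe the $\log^{\star}$-residue of the least inconsistency proof; it only pins $\log(d)$ into an interval $[p^2,p^q]$. The paper's proof earns the parity by first choosing a nonstandard $u$ in a model of $\mathsf{PA}+\Diamond^{h(a)-1}\top$ with $\log^{\star}(u)\equiv k-2$ and $\log^{\star}(u+1)\equiv k-1\pmod{n+1}$, and then applying the theorem with $p=u$, $q=\log(u)+1$, so that the resulting least proof $d$ satisfies $u+1<\log(d)<\exp(u)$ and hence $\log^{\star}(d)\equiv k\pmod{n+1}$. Without this parameter choice the parity claim is unsupported. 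Second, for $i<n$ with $h(b_i)<h(a)-1$ you still need a short $\mathsf{PA}+\Diamond^{h(b_i)}\top$-inconsistency proof below $\exp(\exp(d))$ while retaining $\Diamond^{h(b_i)}\top$; the paper obtains this by a \emph{second} application of the theorem (to $\mathsf{T}=\mathsf{PA}+\Diamond^{h(b_i)}\top$, with $p=d^{\log(d)+1}$), using that $\mathfrak{M}'\models\Diamond^{h(b_i)+1}\top$ so that the hypotheses are met. You do not mention this second step. Finally, since the whole argument is to be carried out \emph{inside} $\mathsf{PA}$, you also need (as the paper does) to pass to $\mathsf{ACA}_0$ and invoke a formalized version of the Injecting Inconsistencies Theorem; this is not automatic and is the subject of the paper's appendix.
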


We will use model-theoretic methods in our proof of Lemma \ref{constructing_models}. More precisely, we will need to use within $\mathsf{PA}$ some facts that we will establish using model-theoretic methods.
There is an approach to formalization in arithmetic of results obtained by model-theoretic methods that is based on the use of the systems of the second-order arithmetic. In particular there is a well-known system $\mathsf{ACA_0}$ that is a conservative extension of $\mathsf{PA}$. We  will use the formalization of model-theoretic notions in systems of second-order arithmetic that could be found in S.~Simpson book \cite[Section~II.8, Section~IV.3]{Sim09}.

The key model-theoretic result that we use is the Injecting Inconsistencies Theorem.  We will use the version of the theorem that is a corollary of the version of the theorem that were proved by A.~Visser and R.~Verbrugge \cite[Theorem~5.1]{VerVis94}. Earlier similar results are due to P.~Hájek, R.~Solovay, J.~Krajíček, and P.~Pudlák \cite{Haj83,Sol89,KraPud89}.

\begin{definition} Suppose $\mathfrak{M}$ is a model of $\mathsf{PA}$. We denote by $\mathfrak{M}\upharpoonright a$ the structure with the domain $\{e\in \mathfrak{M}\mid \mathfrak{M}\models e\le a\}$ the constant $0$ and partial functions $S$, $+$, and $\cdot$ induced by $\mathfrak{M}$ on the domain. For two structures $\mathfrak{A}$ and $\mathfrak{B}$ with the constant $0$ and (maybe) partial functions  $S$, $+$, and $\cdot$ we write
  \begin{enumerate}
  \item $\mathfrak{A}\subseteq \mathfrak{B}$ if the domain of $\mathfrak{A}$ is a subset of the domain of $\mathfrak{B}$ and for any arithmetical term $t(x_1,\ldots,x_n)$ and elements $q_1,\ldots,q_n\in\mathfrak{A}$:
    \begin{enumerate}
    \item if $p$ is the value of $t(q_1,\ldots,q_n)$ in $\mathfrak{B}$ and $p\in\mathfrak{A}$ then the value of $t(q_1,\ldots,q_n)$ is defined in $\mathfrak{A}$ and is equal to $p$,
    \item if $p$ is the value of $t(q_1,\ldots,q_n)$ in $\mathfrak{A}$ then the value of $t(q_1,\ldots,q_n)$ is defined in $\mathfrak{B}$ and is equal to $p$;
    \end{enumerate}
      
  \item $\mathfrak{A}= \mathfrak{B}$ if  $\mathfrak{A}\subseteq \mathfrak{B}$ and $\mathfrak{B}\subseteq \mathfrak{A}$.
  \end{enumerate}
  We note that the definition actually could also be applied to models of $\mathsf{I\Delta_0}$.
\end{definition}

We will show in Appendix \ref{InjIncSec} that the following theorem is formalizable in $\mathsf{ACA}_0$:
\begin{theorem}\label{implanting_inconsistencies} Let $T$ be an extension of $\mathsf{PA}$ by finitely many axioms. Let $\mathsf{Con}_{\mathsf{T}}(x)$ denote the formula $\forall y( \log(y)\le x \to \lnot \mathsf{Prf}_{\mathsf{T}}(y,\gnmb{0=1}))$. Let $\mathfrak{M}$ be a non-standard countable model of $\mathsf{T}$. And let $q$, $p$ be nonstandard elements of $\mathfrak{M}$ such that $\mathfrak{M}\models q\le p$ and $\mathfrak{M}\models \mathsf{Con}_{\mathsf{T}}(p^k)$, for all standard $k$. Then there exists a countable model $\mathfrak{N}$ of $\mathsf{T}$ such that $p\in \mathfrak{N}$ and
  \begin{enumerate}
  \item $\mathfrak{M}\upharpoonright p =\mathfrak{N} \upharpoonright p$;
  \item $\mathfrak{M}\upharpoonright \exp(p^k) \subseteq \mathfrak{N}$, for all  standard $k$;
  \item $\mathfrak{N}\models \lnot \mathsf{Con}_{\mathsf{T}}(p^q)$;
  \item \label{preserved_consistency}  $\mathfrak{N}\models  \mathsf{Con}_{\mathsf{T}}(p^k)$, for all  standard $k$.
  \end{enumerate}

\end{theorem}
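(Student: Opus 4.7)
The plan is to derive Theorem \ref{implanting_inconsistencies} as a corollary of \cite[Theorem~5.1]{VerVis94}, by matching hypotheses and then tightening the conclusion to extract clauses (2) and (4) as well. Verbrugge and Visser's result constructs, from a countable nonstandard $\mathfrak{M}\models T$ equipped with a ``short-consistent'' nonstandard cut, a second countable model $\mathfrak{N}\models T$ that agrees with $\mathfrak{M}$ below a prescribed level and realises a prescribed nonstandard inconsistency of $T$. My first step is therefore to translate the data $p,q$ and the hypothesis $\mathfrak{M}\models\mathsf{Con}_{\mathsf{T}}(p^k)$ for all standard $k$ into the input required by their theorem, with $p^q$ playing the role of the injection level.

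The construction underlying their theorem is a Henkin-style completion, which will be formalised in $\mathsf{ACA}_0$ via the treatment of countable first-order model theory in Simpson \cite[II.8, IV.3]{Sim09}. The theory to be completed is
\[
T' \;=\; T \cup \bigcup_{k\in\omega}\mathrm{Diag}\bigl(\mathfrak{M}\!\upharpoonright\!\exp(p^k)\bigr) \cup \{\lnot \mathsf{Con}_{\mathsf{T}}(p^q)\} \cup \{\mathsf{Con}_{\mathsf{T}}(p^k):k\in\omega\},
\]
where $\mathrm{Diag}\bigl(\mathfrak{M}\!\upharpoonright\!\exp(p^k)\bigr)$ is the atomic and negated-atomic diagram, enlarging the signature by a fresh constant for each element of $\mathfrak{M}$ below $\exp(p^k)$. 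A countable Henkin model of $T'$ automatically realises (1)--(4): clauses (1) and (2) are forced by the diagrams, while (3) and (4) are imposed directly. Consistency of $T'$ reduces by compactness to the consistency, for each standard $k$, of a finite fragment, and this is established internally to $\mathfrak{M}$ by the Verbrugge--Visser argument, which uses a partial truth definition for formulas of bounded complexity together with the hypothesis $\mathsf{Con}_{\mathsf{T}}(p^{k'})$ for $k'$ sufficiently larger than $k$ to produce a definable cut validating $\lnot\mathsf{Con}_{\mathsf{T}}(p^q)\wedge\mathsf{Con}_{\mathsf{T}}(p^k)$.

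The main obstacle I foresee is the delicate bookkeeping at the Henkin step: one must simultaneously preserve the large cut $\mathfrak{M}\!\upharpoonright\!\exp(p^k)$ for every standard $k$, the ``long consistency'' $\mathsf{Con}_{\mathsf{T}}(p^k)$, and the injected nonstandard inconsistency at level $p^q$, while checking that every step remains expressible by arithmetic comprehension and so fits inside $\mathsf{ACA}_0$. Ensuring in particular that the Henkin witnesses do not surreptitiously code short proofs of $0=1$ --- so that clause (4) survives the completion --- is the critical subtlety, and will be handled by drawing witnesses from a canonical enumeration tied to the $\mathsf{ACA}_0$ coding, with the Pudlák-style lower bounds from \cite{Pud86} providing the necessary separation between the standard proof-length level $p^k$ and the injected level $p^q$.
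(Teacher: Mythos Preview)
Your proposal has a genuine gap at clause~(1). A plain Henkin completion of the theory
\[
T' \;=\; T \cup \bigcup_{k\in\omega}\mathrm{Diag}\bigl(\mathfrak{M}\!\upharpoonright\!\exp(p^k)\bigr) \cup \{\lnot \mathsf{Con}_{\mathsf{T}}(p^q)\} \cup \{\mathsf{Con}_{\mathsf{T}}(p^k):k\in\omega\}
\]
does \emph{not} force $\mathfrak{M}\!\upharpoonright\! p = \mathfrak{N}\!\upharpoonright\! p$. The diagrams only give you the inclusion $\mathfrak{M}\!\upharpoonright\! p \subseteq \mathfrak{N}\!\upharpoonright\! p$; nothing in $T'$ prevents a Henkin constant from being interpreted as a \emph{new} element below $p$, and indeed in a generic Henkin model this will happen. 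To secure equality you must omit the type $\{x\le p\}\cup\{x\neq m : m\in\mathfrak{M},\, m\le p\}$, and this is precisely why the Verbrugge--Visser construction in \cite[Theorem~5.1]{VerVis94} is an Omitting Types argument, not a bare Henkin completion. Your description of ``the construction underlying their theorem'' as a Henkin-style completion misidentifies the mechanism; the paper accordingly devotes Appendix~\ref{OmiTypeSec} to formalising the Omitting Types Theorem in $\mathsf{ACA}_0$, and that is the real load-bearing ingredient you are missing.

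There is a second, more structural omission on the $\mathsf{ACA}_0$ side. The Verbrugge--Visser proof works inside the cut $\mathfrak{N}_0=\{u\in\mathfrak{M}: u<\exp(p^k)\text{ for some standard }k\}$, and in $\mathsf{ACA}_0$ one cannot in general produce a full truth definition for this cut. The paper's fix (Theorem~\ref{VisVerII}) is to observe that one only needs $\mathfrak{N}_0$ to be a \emph{weak} model of $\mathsf{R}+\mathsf{B\Sigma_1}$ for a single externally fixed finite $\mathsf{R}\subset\mathsf{I\Delta_0+\Omega_1}$, and then to verify separately that this fixed $\mathsf{R}$ proves the small reflection principle for every finite extension of $\mathsf{PA}$. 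Your proposal's remark about ``checking that every step remains expressible by arithmetic comprehension'' does not engage with this issue, and your focus on Henkin witnesses ``surreptitiously coding short proofs'' is not where the difficulty lies --- clause~(4) is already secured by including $\mathsf{Con}_{\mathsf{T}}(p^k)$ in $T'$; it is clause~(1) and the internal truth-definition bookkeeping that need work.
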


Let us now prove Lemma \ref{constructing_models} using the formalization of Theorem \ref{implanting_inconsistencies}.
\begin{proof}
  It would be enough to prove the lemma for the case when $b$ is an immediate successor of $a$. Indeed, after that we will be able to derive $\Diamond^n\mathsf{F}_b$ for any $b$, $a\prec b$, where $n$ is the length of the maximal chain from $a$ to $b$; next we could conclude that we have the required $\Diamond \mathsf{F}_b$.

  Now let us consider the case when $b$ is an immediate successor of $a$ and is $b_k$ in our fixed order $b_0,\ldots,b_n$ of the immediate successors of $a$.

  For the rest of the proof we reason in $\mathsf{ACA_0}+\mathsf{F}_a+\Box^{h(r)+1}\bot$ in order to show that we have $\Diamond \mathsf{F}_b$; since $\mathsf{ACA_0}$ is a conservative extension of $\mathsf{PA}$, this will conclude the proof.

  Since we have $\Diamond^{h(a)}\top$, we could construct a model $\mathfrak{M}$ of $\mathsf{PA}+\Diamond^{h(a)-1}\top$. Suppose $v\in \mathfrak{M}$ is the least $\mathsf{PA}+\Diamond^{h(a)-1}\top$-proof of $0=1$ in $\mathfrak{M}$, if there exists one and an arbitrary nonstandard number, otherwise. Note that since we have $\Diamond^{h(a)}\top$, the element $v$ couldn't be standard. Next we find some nonstandard $u\in \mathfrak{M}$ such that
  \begin{enumerate}
  \item $\mathfrak{M}\models \exp(\exp(u))<v$,
  \item $\mathfrak{M} \models \log^{\star} (u+1)\equiv k-1 \;\; (\mathrm{mod} \;\;n+1)$,
  \item  $\mathfrak{M} \models \log^{\star} (u)\equiv k-2 \;\; (\mathrm{mod} \;\;n+1)$.
  \end{enumerate}
  We can find $u$ with this properties since we know that the functions $\exp(x)$ and $\exp^{\star}(x)$ are total on standard natural numbers and hence we know that the functions $\log(x)$ and $\log^{\star}(x)$ map nonstandard elements to nonstandard elements in  $\mathfrak{M}$.
  
  Now we apply Theorem \ref{implanting_inconsistencies}  to the model $\mathfrak{M}$ with $p=u$ and $q=\log(u)+1$. We obtain a model $\mathfrak{M}'$ of $\mathsf{PA}+\Diamond^{h(a)-1}\top$ such that $\mathfrak{M}\upharpoonright u=\mathfrak{M}'\upharpoonright u$ and there is the least $\mathsf{PA}+\Diamond^{h(a)-1}\top$-proof $d\in \mathfrak{M}'$ of $0=1$ such that  $$\mathfrak{M}'\models u+1<u^2<\log(d)\le u^{\log(u)+1}\le \exp((\log(u)+1)^2)< \exp(u).$$ 
  Thus, $$\mathfrak{M}'\models \log^{\star} (d)\equiv k \;\; (\mathrm{mod} \;\;n+1).$$ If $h(b)=h(a)-1$, then we have constructed a model of $\mathsf{PA}+\mathsf{C}_b+\Diamond^{h(b)}\top$. 

  Assume $h(b)<h(a)-1$. Clearly, there are no $\mathsf{PA}+\Diamond^{h(b)}\top$-proofs  of $0=1$ in $\mathfrak{M}'$. We apply Theorem \ref{implanting_inconsistencies} to $\mathfrak{M}'$ with $p=d^{\log(d)+1}$ and $q=\log(d)+1$. We obtain a model $\mathfrak{M}''$ of $\mathsf{PA}+\Diamond^{h(b)}\top$ such that $$\mathfrak{M}'\upharpoonright d^{\log(d)+1}=\mathfrak{M}''\upharpoonright d^{\log(d)+1},$$ there is a $\mathsf{PA}+\Diamond^{h(b)}\top$-proof of $0=1$ in $\mathfrak{M}''$ and for the least $\mathsf{PA}+\Diamond^{h(b)}\top$-proof $e\in\mathfrak{M}''$ of $0=1$ we have
  $$\mathfrak{M}''\models \log(e)\le d^{(\log(d)+1)^2}\le \exp((\log(d)+1)^3)<\exp(d).$$
  Since $\mathfrak{M}'\upharpoonright d^{\log(d)+1}=\mathfrak{M}''\upharpoonright d^{\log(d)+1}$ and $\mathsf{Prf}(x,y)$ is a $\Delta_0$ predicate, we see that $d$ is the least $\mathsf{PA}+\Diamond^{h(a)-1}\top$-proof of $0=1$ in $\mathfrak{M}''$. Hence $\mathfrak{M}''$ is a model of $\mathsf{PA}+\mathsf{C}_b+\Diamond^{h(b)}\top$.

  Thus, under no additional assumptions, we have a model of  $\mathsf{PA}+\mathsf{C}_b+\Diamond^{h(b)}\top$. Since all $\mathsf{C}_c$, for $c\preceq a$, are $\Sigma_1$-sentences, actually we have a model of $\mathsf{PA}+\mathsf{F}_b$. Therefore, $\Diamond \mathsf{F}_b$.
\end{proof}

\section{Conclusions} In the present paper we have gave a new method of constructing arithmetical evaluations of modal formulas from a given Kripke model and proved arithmetical completeness of $\mathsf{GL}$ with respect to provability in $\mathsf{PA}$ using the method. We consider the evaluations that have been constructed in the paper to be more ``natural'' than the evaluations provided by Solovay's proof.

We proved the theorem specifically for the standard provability predicate for $\mathsf{PA}$. It is unclear to author, for what exact class of provability predicates our methods are applicable. The most essential limitation for our technique seems to be the fact that it relies on the formalized version of Theorem \ref{implanting_inconsistencies}. It seems very likely that for theories that are stronger than $\mathsf{PA}$ one could apply our method with only minor adjustments. In particular, it seems that for a general result one would need to modify $\mathsf{Prf}$-predicates while preserving $\mathsf{Prv}$-predicate (up to provable equivalence) in order to ensure that \cite[Theorem~5.1]{VerVis94} is applicable. For theories that are weaker than $\mathsf{PA}$, there are more significant problems with adopting our technique. Namely, our technique essentially relies on formalized version of the Injecting Inconsistencies Theorem. And the proofs of stronger versions of this theorem \cite{KraPud89,VerVis94} essentially rely on the Omitting Types Theorem. We have provided a  proof of the Omitting Types Theorem  in $\mathsf{ACA_0}$ in Appendix \ref{OmiTypeSec}, but it is not clear whether it could be done in weaker systems. The author is not familiar with results that calibrate reverse mathematics strength of the required version of the Omitting Types Theorem. We note that reverse mathematics analysis of other version of Omitting Types Theorem have been done by D.~Hirschfeldt, R.~Shore and T.~Slaman \cite{HSS09}, in particular from their results it follows that their version of the Omitting Types Theorem is not provable in $\mathsf{WKL_0}$ but follows from $\mathsf{RT_2^2}$ (and thus couldn't be equivalent to $\mathsf{ACA_0}$ over $\mathsf{RCA_0}$). But nevertheless, we conjecture that the same kind of evaluations as we have gave in Section~\ref{proof_section} will provide completeness of $\mathsf{GL}$ for all finitely axiomatizable extensions of $\mathsf{I\Delta_0+Exp}$.

Also, since the technique that were introduced in the paper is significantly different from Solovay's technique, it seems plausible that it may give some advantage for some open problems, for which Solovay's method have been the ``default approach'' before (see \cite{BekVis06} for open problems in provability logic).

\subsection*{Acknowledgments} I want to thank David Fernández-Duque and Albert Visser for their questions that were an important part of the reason why I have started the research on the subject. And I want to thank Paula Henk, Vladimir Yu. Shavrukov, and Albert Visser for their useful comments on an early draft of the paper. 

\bibliographystyle{alpha}
\bibliography{bibliography}

\begin{subappendices}
\renewcommand{\thesection}{\Alph{section}}%
\setcounter{section}{0} 

\section{Formalization of the Omitting Types Theorem}
\label{OmiTypeSec}

In order to formalize  Theorem \ref{implanting_inconsistencies} in $\mathsf{ACA_0}$ we will first show that the Omitting Types Theorem is formalizable in $\mathsf{ACA_0}$. We will adopt the proof from \cite{ChaKei90}. We remind a reader that we use the approach to formalization of model theory from Simpson book \cite{Sim09}.

\begin{definition}[$\mathsf{ACA}_0$] Let $\mathsf{T}$ be a first-order theory and $\Sigma=\Sigma(x_1,\ldots,x_n)$ be a set of formulas of the language of $\mathsf{T}$ that have no free variables other than $x_1,\ldots,x_n$. We say that that $\mathsf{T}$ \emph{locally omits} $\Sigma$ if for every formula $\varphi(x_1,\ldots,x_n)$ at least one of the following fails:
  \begin{enumerate}
  \item the theory $\mathsf{T}+\varphi$ is consistent;
  \item for all $\psi\in \Sigma$ we have $\mathsf{T}\vdash \forall x_1,\ldots,x_n(\varphi\to \psi)$.
  \end{enumerate}
  We say that a model $\mathfrak{M}$ of $\mathsf{T}$ \emph{omits} $\Sigma$ if for any $a_1,\ldots,a_n\in \mathfrak{M}$ there is a formula $\psi(x_1,\ldots,x_n)\in\Sigma$ such that $\mathfrak{M}\not \models \psi(a_1,\ldots,a_n)$.
\end{definition}

\begin{theorem}[$\mathsf{ACA}_0$] Suppose $\mathsf{T}$ is a consistent theory that locally omits the set of formulas $\Sigma(x_1,\ldots,x_n)$. Then there is a model $\mathfrak{M}$ of $\mathsf{T}$ that omits the set $\Sigma$.
\end{theorem}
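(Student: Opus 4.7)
The plan is to run the classical Henkin-style omitting-types construction and verify that each step goes through inside $\mathsf{ACA}_0$. First I would adjoin countably many fresh constants $c_0,c_1,\ldots$ to the language of $\mathsf{T}$ and fix arithmetical enumerations $\theta_0,\theta_1,\ldots$ of the sentences of the expanded language and $\vec{c}^{(0)},\vec{c}^{(1)},\ldots$ of the $n$-tuples of new constants. Then I would recursively build a nondecreasing sequence $S_0\subseteq S_1\subseteq\cdots$ of finite sets of expanded-language sentences with $\mathsf{T}\cup S_k$ consistent at every stage, interleaving three chores: supplying a Henkin witness for some pending existential in $S_k$, deciding $\theta_k$ (adding $\theta_k$ or $\lnot\theta_k$ according to which preserves consistency), and, for the tuple $\vec{c}^{(k)}$, picking some $\psi\in\Sigma$ with $\mathsf{T}\cup S_k\cup\{\lnot\psi(\vec{c}^{(k)})\}$ consistent and adding $\lnot\psi(\vec{c}^{(k)})$.

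The only nontrivial step is the omission chore. Gather the finitely many sentences in $S_k$ into a single formula $\varphi(\vec{c}^{(k)},\vec{d})$, where $\vec{d}$ lists the remaining new constants that occur. If no $\psi\in\Sigma$ worked, then for every $\psi\in\Sigma$ the theory $\mathsf{T}\cup S_k\cup\{\lnot\psi(\vec{c}^{(k)})\}$ is inconsistent, hence $\mathsf{T}\vdash\varphi(\vec{c}^{(k)},\vec{d})\to\psi(\vec{c}^{(k)})$, and since the new constants do not occur in $\mathsf{T}$ we may universally generalize to obtain $\mathsf{T}\vdash\forall\vec{x}\,\forall\vec{y}\,(\varphi(\vec{x},\vec{y})\to\psi(\vec{x}))$. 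Setting $\eta(\vec{x})\equiv\exists\vec{y}\,\varphi(\vec{x},\vec{y})$ produces a formula such that $\mathsf{T}+\eta$ is consistent (because $\mathsf{T}\cup S_k$ is) and $\mathsf{T}\vdash\forall\vec{x}\,(\eta(\vec{x})\to\psi(\vec{x}))$ for every $\psi\in\Sigma$, contradicting local omission.

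With the sequence in hand, put $S=\bigcup_k S_k$ and build $\mathfrak{M}$ as the standard Henkin model, with domain the equivalence classes of new constants under $c_i\sim c_j\Leftrightarrow (c_i=c_j)\in S$ and with function and relation symbols interpreted as dictated by $S$. Henkin completeness yields $\mathfrak{M}\models\mathsf{T}\cup S$; since every element of $\mathfrak{M}$ is named by a new constant (use the Henkin witness for $\exists x(x=t)$) every $n$-tuple in $\mathfrak{M}$ equals some $\vec{c}^{(k)}$, so the omission chore at stage $k$ delivers a $\psi\in\Sigma$ with $\mathfrak{M}\not\models\psi(\vec{c}^{(k)})$, i.e.\ $\mathfrak{M}$ omits $\Sigma$.

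The main obstacle for carrying this out in $\mathsf{ACA}_0$ is that the recipe passing from $S_k$ to $S_{k+1}$ refers to consistency of a finite theory over $\mathsf{T}$, which is a $\Pi^0_1$ predicate, and to a minimization over such a predicate in the omission chore. Arithmetical comprehension allows to form, at each stage, the $\Pi^0_1$ set of indices of ``workable'' witnesses $\psi\in\Sigma$ and to pick its least element (nonempty by local omission); arithmetical recursion, available in $\mathsf{ACA}_0$ (cf.\ \cite{Sim09}), then furnishes the sequence $(S_k)_k$ as a set. The union $S$, the Henkin model $\mathfrak{M}$, and the partial truth definition needed to verify $\mathfrak{M}\models\mathsf{T}$ and that $\mathfrak{M}$ omits $\Sigma$ are all arithmetical and therefore available in $\mathsf{ACA}_0$.
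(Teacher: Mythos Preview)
Your proposal is correct and follows essentially the same Henkin-style construction as the paper's proof (which in turn adapts Chang--Keisler), with only cosmetic differences: you treat general $n$ directly while the paper reduces to $n=1$, and you spell out the local-omission contradiction that the paper leaves implicit. The formalizability justification is also the same in substance---the paper phrases it as ``the property of being the code of $\langle \mathsf{U}_0,\ldots,\mathsf{U}_y\rangle$ is expressible without second-order quantifiers,'' which is exactly your observation that the stage-to-stage transition is arithmetical and hence the sequence exists by arithmetical comprehension.
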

\begin{proof}We will follow the proof of \cite[Theorem~2.2.9]{ChaKei90} but make sure that our arguments could be carried out in $\mathsf{ACA}_0$.

We will prove the theorem for $n=1$, i.e. $\Sigma=\Sigma(x)$. The case $n>1$ could be proved essentially the same way, but the notations would be more complicated.  

  We extend the language of $\mathsf{T}$ by fresh constants $c_0,c_1,\ldots$. We arrange all sentences of the extended language in a sequence $\varphi_0,\varphi_1,\ldots$ (since we work in $\mathsf{ACA}_0$ the formulas are encoded by Gödel numbers and we could arrange them by their Gödel numbers). We will construct a sequence of finite sets of sentences $$\emptyset=\mathsf{U}_0\subset \mathsf{U}_1\subset \ldots \subset \mathsf{U}_m\subset \ldots$$
  such that for every $m$ we have the following:
  \begin{enumerate}
  \item \label{U_seq_1} $\mathsf{U}_m$ is consistent with $\mathsf{T}$;
  \item \label{U_seq_2} either $\varphi_m\in \mathsf{U}_{m+1}$ or $\lnot\varphi_m\in \mathsf{U}_{m+1}$;
  \item \label{U_seq_3} if $\varphi_m$ is of the form $\exists x \psi(x)$ and $\varphi_m\in \mathsf{U}_{m+1}$ then $\psi(c_p)\in \mathsf{U}_{m+1}$, where $c_p$ is the first $c_i$ that doesn't occur in $\mathsf{U}_m$ or $\varphi_m$;
  \item \label{U_seq_4} there is a formula $\chi(x)\in \Sigma$ such that $\lnot \chi(c_m)\in\mathsf{U}_{m+1}$.   
  \end{enumerate}

  We will give the definition that will determine unique sequence  $\mathsf{U}_0,\mathsf{U}_1,\ldots$. We want to make sure that for our definition of the sequence $\mathsf{U}_0,\mathsf{U}_1,\ldots$, the property of a number $x$ to be the code of the sequence $\langle \mathsf{U}_0,\mathsf{U}_1,\ldots,\mathsf{U}_y\rangle$ is expressible by a formula without second-order quantifiers. If we will ensure this, then we will be able to construct a set that encodes the sequence $\mathsf{U}_0,\mathsf{U}_1, \ldots, \mathsf{U}_m,\ldots$ using the arithmetic comprehension.

  Let us define $\mathsf{U}_{m+1}$ in terms of $\mathsf{U}_m$. If $\varphi_m$ is consistent with $\mathsf{T}\cup \mathsf{U}_m$ then we put $\sigma_m$ to be $\varphi_m$. Otherwise we put $\sigma_m$ to be $\lnot \varphi_m$. If $\sigma_m$ is $\varphi_m$ and is of the form $\exists x \psi(x)$ then we put $\xi_m$ to be $\psi(c_p)$, where $c_p$ is the first $c_i$ that doesn't occur in $\mathsf{U}_m$ or $\varphi_m$.  Otherwise, we put $\xi_m$ to be equal to $\sigma_m$.  We choose the formula $\chi(x)$ with the smallest Gödel number such that $ \chi(x)\in\Sigma$ and $\mathsf{T}\nvdash \bigwedge \mathsf{U}_m \to \chi(c_m)$. We put $\mathsf{U}_{m+1}=\mathsf{U}_m\cup\{\xi_m,\sigma_m,\chi(c_m)\}$. 

  It is easy to see that for this definition, indeed, we could express by a formula without second-order quantifiers  the property of a number $x$ to be the code of the sequence $\langle \mathsf{U}_0,\mathsf{U}_1,\ldots,\mathsf{U}_y\rangle$.  By a trivial induction on $y$ we could prove that for every $y$ the said sequence exists and unique. Thus, we have obtained the sequence $\mathsf{U}_0, \mathsf{U}_1,\ldots,\mathsf{U}_m,\ldots$ encoded by a set.

  Now, using the definition of the sequence, we could easily prove that the sequence satisfy the conditions \ref{U_seq_1}., \ref{U_seq_2}., \ref{U_seq_3}., and \ref{U_seq_4}.

  We consider the union $\mathsf{T}\cup\bigcup\limits_{i\in \mathbb{N}}\mathsf{U}_i=\mathsf{T}'$. By condition \ref{U_seq_1}. the theory $\mathsf{T}'$ is consistent. By condition \ref{U_seq_2}. the theory $\mathsf{T}'$ is complete. By condition \ref{U_seq_3}. the theory $\mathsf{T}'$ gives the truth definition with Tarski conditions for a model with the  domain $\{c_0,c_1,\ldots\}$; this gives us a model $\mathfrak{M}$ of $\mathsf{T}'$ with the domain $\{c_0,c_1,\ldots\}$. By condition \ref{U_seq_4}. the model $\mathfrak{M}$ omits the set $\Sigma$.
\end{proof}

\section{Formalization of the Injecting Inconsistencies Theorem}
\label{InjIncSec}

Now we are going to check that Theorem \ref{implanting_inconsistencies} is provable in $\mathsf{ACA}_0$. Below we assume that a reader is familiar with the paper \cite{VerVis94} and we will use some notions from the paper without giving the definitions here. 
\begin{theorem}\label{VisVerII} Let $\mathsf{R}\subset \mathsf{I\Delta_0+\Omega_1}$ be a finitely axiomatizable theory. Then $\mathsf{ACA_0}$ proves the following:

  Let $\mathsf{T}\supseteq \mathsf{I\Delta_0+\Omega_1}$ be a $\Sigma_1^b$-axiomatized theory for which the small reflection principle is provable in $\mathsf{R}$. Let $\mathsf{Con}_{\mathsf{T}}(x)$ denote the formula $\forall y( \log(y)\le x \to \lnot\mathsf{Prf}_{\mathsf{T}}(y,\gnmb{0=1}))$. Let $\mathfrak{M}$ be a non-standard model of $\mathsf{T}$ and let $c$, $a$ be nonstandard elements of $\mathfrak{M}$ such that  $\mathfrak{M}\models c\le a$, $\exp(a^c)\in\mathfrak{M}$, and $\mathfrak{M}\models \mathsf{Con}_{\mathsf{T}}(a^k)$, for all standard $k$. Then there exists a model $\mathfrak{K}$ of $\mathsf{T}$ such that $a\in \mathfrak{K}$ and
  \begin{enumerate}
  \item $\mathfrak{M}\upharpoonright a =\mathfrak{K} \upharpoonright a$;
  \item $\mathfrak{M}\upharpoonright \exp(a^k) \subseteq \mathfrak{K}$, for all standard $k$;
  \item $\mathfrak{K}\models \lnot \mathsf{Con}_{\mathsf{T}}(a^c)$;
  \item for all  standard $k$ we have $\mathfrak{K}\models  \mathsf{Con}_{\mathsf{T}}(a^k)$;
  \item $\mathfrak{K}\models \exp(a^c)\downarrow$.
  \end{enumerate}
\end{theorem}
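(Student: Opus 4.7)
The approach is to adapt the proof of \cite[Theorem~5.1]{VerVis94} to the second-order setting provided by $\mathsf{ACA}_0$. The Verbrugge--Visser argument is a Henkin-style construction equipped with an omission-of-types step that prevents new elements from appearing in the initial segment $[0,a]$; so once we have the formalized Omitting Types Theorem from Appendix \ref{OmiTypeSec}, the passage to $\mathsf{ACA}_0$ is mostly bookkeeping, with $\mathfrak{M}$, the diagram we are about to take, and the model $\mathfrak{K}$ we are about to produce all coded as sets in the sense of \cite[Section~IV.3]{Sim09}.

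First I would set up the theory. Expand the language of $\mathsf{T}$ by fresh constants $\bar e$ for each $e \in \mathfrak{M}$ in the cut $I := \bigcup_{k} \mathfrak{M}\upharpoonright \exp(a^k)$ (with $k$ ranging over standard naturals) and by one additional constant $d$ intended to witness inconsistency. Let $\mathsf{T}^{\ast}$ consist of $\mathsf{T}$, the atomic diagram of $I$ in this expanded language, the sentence $\mathsf{Prf}_{\mathsf{T}}(d,\gnmb{0=1}) \land \log(d) \le \bar a^{\bar c}$, the totality statement $\exp(\bar a^{\bar c})\downarrow$ (witnessed by the constant for $\exp(a^c)$, which lies in $I$ by hypothesis), and the sentences $\mathsf{Con}_{\mathsf{T}}(\bar a^{k})$ for each standard $k$.

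The two non-routine ingredients are the consistency of $\mathsf{T}^{\ast}$ and the \emph{local omission} in $\mathsf{T}^{\ast}$ of the type
$$\Sigma_{\mathrm{cut}}(x) \;:=\; \{x \le \bar a\} \cup \{x \ne \bar e : e \in \mathfrak{M},\; \mathfrak{M}\models e \le a\}.$$
Both are established exactly as in \cite{VerVis94}, using the small reflection principle assumed for $\mathsf{T}$. For consistency: by compactness it suffices to handle finite subsets, which via small reflection reduce to the hypothesis $\mathfrak{M}\models \mathsf{Con}_{\mathsf{T}}(a^k)$ for every standard $k$. For local omission: a short $\mathsf{T}^{\ast}$-proof that a candidate element below $\bar a$ differs from every $\bar e$ with $e\le a$ would, again by small reflection, inject an inconsistency below a standard $a^k$, contradicting the same hypothesis. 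Applying the formalized Omitting Types Theorem to $\mathsf{T}^{\ast}$ and $\Sigma_{\mathrm{cut}}$ then yields a model $\mathfrak{K}$ with the required properties: (1) and (2) from the diagram together with omission of $\Sigma_{\mathrm{cut}}$; (3) from the constant $d$; (4) from the explicit axioms; and (5) because $\exp(a^c) \in I$.

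The main obstacle is the small-reflection bookkeeping in the previous paragraph: one has to verify that the short proofs produced by naive combinations of instances do not exceed the tolerances at which small reflection is applicable. This is the technical core of \cite{VerVis94}, and it transplants verbatim, since all of its content is finitary and already provable in $\mathsf{T}\supseteq \mathsf{I\Delta_0+\Omega_1}$; the role of $\mathsf{ACA}_0$ is confined to supplying the Henkin plus Omitting-Types framework inside which those facts are deployed.
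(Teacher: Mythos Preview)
Your high-level strategy matches the paper's: follow \cite[Theorem~5.1]{VerVis94} and invoke the formalized Omitting Types Theorem from Appendix~\ref{OmiTypeSec}. But you have overlooked the one step the paper singles out as genuinely non-trivial, and your closing claim that ``the role of $\mathsf{ACA}_0$ is confined to supplying the Henkin plus Omitting-Types framework'' is exactly where the gap lies.

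In the Verbrugge--Visser argument the consistency and local-omission verifications are not carried out directly in $\mathfrak{M}$; they are carried out in the cut
\[
\mathfrak{N}=\{u\in\mathfrak{M}\mid u<\exp(a^k)\text{ for some standard }k\},
\]
which (via \cite[Theorem~1]{WilPar89}) is used as a model of $\mathsf{B\Sigma_1+\Omega_1}$, and the small-reflection bookkeeping is applied \emph{inside} $\mathfrak{N}$. The problem in $\mathsf{ACA}_0$ is that $\mathfrak{N}$ is not handed to you with a full satisfaction relation: arithmetical comprehension does not in general manufacture a truth definition for an arbitrary internally specified substructure with respect to an infinite, internally quantified theory such as $\mathsf{I\Delta_0+\Omega_1}$. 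This is precisely why the statement strengthens the hypothesis from ``small reflection provable in $\mathsf{I\Delta_0+\Omega_1}$'' to ``small reflection provable in the fixed finite theory $\mathsf{R}$'': with $\mathsf{R}$ externally fixed and finite, one can build by arithmetical comprehension a \emph{weak-model} truth definition for $\mathfrak{N}$ covering the axioms of $\mathsf{R}+\mathsf{B\Sigma_1}$, and that suffices to run the Verbrugge--Visser calculations. Your write-up never mentions $\mathfrak{N}$ as a model, never explains how its satisfaction relation is obtained, and never uses the finiteness of $\mathsf{R}$ --- so the very modification that distinguishes the present theorem from \cite[Theorem~5.1]{VerVis94} goes unexplained. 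Once you add this point (weak model of $\mathsf{R}+\mathsf{B\Sigma_1}$ via arithmetical comprehension, in place of the full model of $\mathsf{B\Sigma_1+\Omega_1}$ used in \cite{VerVis94}), the rest of your sketch is in line with the paper.
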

\begin{proof} Essentially, we just need to formalize the proof of \cite[Theorem~5.1]{VerVis94} in $\mathsf{ACA_0}$. The only difference between our formulation and the formulation by A.~Visser and R.~Verbrugge is that we have replaced the requirement that the small reflection principle is provable in $\mathsf{I\Delta_0+\Omega_1}$ with a stronger requirement that states that the small reflection principle is provable in $\mathsf{R}$. First, we show how to formalize the proof itself and then explain why the results used in the proof are formalizable in $\mathsf{ACA_0}$.

  The only non-trivial part of the formalization of the proof itself is the issue with the lack of truth definition for the cut $$\mathfrak{N}=\{u\in \mathfrak{M}\mid u<\exp(a^k)\mbox{, for some standard $k$}\}$$ of $\mathfrak{M}$.  However, for the purposes of the proof, it would be enough for $\mathfrak{N}$ to be a weak model (i.e. poses truth definition only for axioms, \cite[Definition~II.8.9]{Sim09}). Moreover, unlike the original proof of Visser and Verbrugge, we just need $\mathfrak{N}$ to be a weak model of $\mathsf{R}+\mathsf{B\Sigma_1}$ rather than a model of $\mathsf{B\Sigma_1+\Omega_1}$.  And since $\mathsf{R}$ is externally fixed finitely axiomatizable theory, we could create the required truth definition straightforward using arithmetical comprehension. Other parts of the proof could be formalized without any complications.

  The proof of \cite[Theorem~5.1]{VerVis94} used  A.~Wilkie and J.~Paris result \cite[Theorem~1]{WilPar89}, Pudlák results from \cite{Pud86}, and the Omitting Types Theorem. We have already formalized the Omitting Types Theorem in Appendix \ref{OmiTypeSec}. The proof of \cite[Theorem~1]{WilPar89} is trivial and could be easily formalized in $\mathsf{ACA_0}$. The technique of \cite{Pud86} is purely finitistic and thus could be easily formalized in $\mathsf{ACA_0}$.
\end{proof}

Now we want to derive the formalization of Theorem \ref{implanting_inconsistencies} from Theorem \ref{VisVerII}. In order to do it, we first need to fix some finite fragment  $\mathsf{R}\subset \mathsf{I\Delta_0+\Omega_1}$. And next we need to show in $\mathsf{ACA}_0$ that all the extensions of $\mathsf{PA}$ by finitely many axioms are $\Sigma_1^b$-axiomatizable extensions of $\mathsf{I\Delta_0+\Omega_1}$ for which $\mathsf{R}$ proves the small reflection principle. Obviously,  extensions of $\mathsf{PA}$ by finitely many axioms are $\Sigma_1^b$-axiomatizable (and it could be checked in $\mathsf{ACA_0}$).

In \cite[Theorem~4.20]{VerVis94} it were established that $\mathsf{I\Delta_0+\Omega_1}$ proves small reflection principle for $\mathsf{I\Delta_0+\Omega_1}$. By inspecting the proof, it is easy to see that it is possible to use only finitely many axioms of $\mathsf{I\Delta_0+\Omega_1}$ in order to prove all the instances of the small reflection principle. Now we will indicate how to modify the proof of \cite[Theorem~4.20]{VerVis94} in order to prove in a finite fragment of $\mathsf{I\Delta_0+\Omega_1}$ all the instances of the small reflection principle for all the extensions of $\mathsf{PA}$ by finitely many axioms. Actually, the only part of the proof that should be changed is \cite[Lemma~4.16]{VerVis94} that were needed to deal with the schema of $\Delta_0$-induction schema in the case of  $\mathsf{I\Delta_0+\Omega_1}$-provability. For our adaptation we need to replace it with the analogous lemma that will deal with schema of full induction in the case of provability in $\mathsf{PA}$. This analogous lemma could be proved essentially in the same way as \cite[Lemma~4.16]{VerVis94} itself with the only difference that the last part of the proof that were reducing an instance of induction schema to an instance of $\Delta_0$-induction schema will not be needed any longer.  This concludes the proof of Theorem \ref{implanting_inconsistencies} in $\mathsf{ACA_0}$.
\end{subappendices}
\end{document}